\newcommand{\f}{\frac}
\newcommand{\ds}{\displaystyle}
 \newtheorem{thm}{Theorem}[section]
 \newtheorem{lem}[thm]{Lemma}
 \theoremstyle{definition}
 \theoremstyle{remark}
 \numberwithin{equation}{section}
\begin{document}

\title[Characterizing finite $p$-groups by their Schur multipliers]
 {Characterizing finite $p$-groups by their Schur multipliers}

\author[P. Niroomand]{Peyman Niroomand}
\address{School of Mathematics and Computer Science\\
Damghan University of Basic Sciences, Damghan, Iran}
\email{p$\_$niroomand@yahoo.com}

\thanks{\textit{Mathematics Subject Classification 2010.} Primary 20D15; Secondary 20E34, 20F18.}


\keywords{Schur multiplier, $p$-group.}



\begin{abstract}
It has been proved in \cite{ge} for every $p$-group of order $p^n$,
$|\mathcal{M}(G)|=p^{\f{1}{2}n(n-1)-t(G)}$, where $t(G)\geq 0$. In
\cite{be, el, zh}, the structure of $G$ has been characterized for
$t(G)=0,1,2,3$ by several authors. Also in \cite{sa}, the structure
of $G$ characterized when $t(G)=4$ and $Z(G)$ is elementary abelian.
This paper is devoted to classify the structure of $G$ when $t(G)=4$
without any condition.
\end{abstract}

\maketitle
\section{introduction and motivation}
The literature of $\mathcal{M}(G)$, the Schur multiplier is going
back to the work of Schur in 1904. It is important to know for which
classes of groups the structure of group can be completely described
only by the order of $\mathcal{M}(G)$. The answer to this question
for the class of finite $p$-group, was born in a result of Green. It
is shown that in \cite{ge}, for a given $p$-group of order $p^n$,
$|\mathcal{M}(G)|= p^{\f{1}{2}n(n-1)-t(G)}$ where $t(G)\geq0$.
Several authors tried to characterize the structure of $G$ by
$t(G)$. The structure of $G$ was classified in \cite{be, zh} for
$t(G)=0,1,2$. When $t(G)=3$, Ellis in \cite{el} classified the
structure  of $G$ by a different method to that of \cite{be, zh}. He
also could find the same results for $t(G)=0,1,2$.

By a similar technique to \cite[Theorem 1]{el}, the structure of
$p$-groups with $t(G)=4$ has been determined in \cite{sa} when
$Z(G)$ is elementary abelian, but it seems there are some missing
points in classifying the structure of these groups. The Main
Theorem  shows that there are some groups which are not seen in
these classification.

Recently in \cite{ni, ni2}, the author  gives some results on the
Schur multiplier of non-abelian $p$-groups. Handling these results,
the present paper  is devoted to classify the structure of all
finite $p$-groups when $t(G)=4$ without any condition.
\section{Some notations and known results}
In this section, we summarize some known results which are used
throughout this paper.

Using notations and terminology of \cite{el}, here $D_8$ and $Q_8$
denote the dihedral and quaternion group of order $8$, $E_1$ and
$E_2$ denote the extra special $p$-groups of order $p^3$ of exponent
$p$ and $p^2$, respectively. Also ${\mathbb{Z}}^{(m)}_{p^{n}}$
denotes the direct product of $m$ copies of the cyclic group of
order $p^n$.

In this paper, we say that $G$ has the property $t(G)=4$ or briefly
with $t(G)=4$, if $|\mathcal{M}(G)|=p^{\f{1}{2}n(n-1)-4}$.


\begin{thm}$\mathrm{(See}$ \cite[Main Theorem]{ni}$\mathrm{).}$\label{2} Let $G$ be a non-abelian
$p$-group of order $p^n$. If $|G^{'}| = p^k$, then we have
\[|\mathcal{M}(G)|\leq p^{\frac{1}{2}(n+k-2)(n-k-1)+1}.\] In
particular,
\[|\mathcal{M}(G)|\leq p^{\frac{1}{2}(n-1)(n-2)+1},\]
and the equality holds  in the last bound if and only if  $G=
E_1\times Z$, where $Z$ is an elementary abelian $p$-group.
\end{thm}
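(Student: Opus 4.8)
The plan is to prove the sharper $k$-dependent bound by induction on $k$ (where $|G'|=p^k$), then read off the second inequality by optimizing over $k$, and finally track the equality case. The engine is the Ganea exact sequence: choosing a central subgroup $K\le G'\cap Z(G)$ with $|K|=p$ and writing $Q=G/K$, one has
\[
G^{ab}\otimes K \st{\theta}{\lo}\mathcal{M}(G)\lo \mathcal{M}(Q)\lo K\lo 0,
\]
where the map $K\to G^{ab}$ coming from the five-term sequence vanishes because $K\le G'$. Since $\mathcal{M}(Q)\to K$ is then onto, exactness yields the numerical identity $|\mathcal{M}(G)|=|\operatorname{im}\theta|\cdot|\mathcal{M}(Q)|/p$, together with $|\operatorname{im}\theta|\le|G^{ab}\otimes K|=p^{d}$, where $d=d(G)=d(G^{ab})\le n-k$. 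Hence
\[
|\mathcal{M}(G)|\le p^{\,d-1}\,|\mathcal{M}(G/K)|.
\]

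For the inductive step assume $k\ge 2$; then $Q$ is still non-abelian with $|Q|=p^{n-1}$ and $|Q'|=p^{k-1}$, so the induction hypothesis bounds $\log_p|\mathcal{M}(Q)|$ by $\f12(n+k-4)(n-k-1)+1$. Feeding this into the displayed recursion, the target exponent $\f12(n+k-2)(n-k-1)+1$ follows once $d-1\le n-k-1$, i.e.\ $d\le n-k$, which always holds; here $n-k-1\ge 1$ since a non-abelian $p$-group satisfies $|G^{ab}|\ge p^2$. Thus the inductive step is purely arithmetic, the difference of the two quadratics being exactly $n-k-1$.

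The main obstacle is the base case $k=1$, where $Q=G^{ab}$ and the crude bound $|\operatorname{im}\theta|\le p^{d}$ is far too weak: for $G^{ab}$ elementary abelian of rank $n-1$ one must in fact have $|\operatorname{im}\theta|\le p^2$, not $p^{n-1}$. In this case $G$ has class $2$ with $G'=K\cong\mathbb{Z}_p$ central, and the commutator induces a well-defined non-degenerate alternating form on $G^{ab}$ modulo its radical, valued in $G'$. I would use this form to reduce to the classification of groups with $|G'|=p$—each is a central product governed by the form, and when $G^{ab}$ is elementary abelian one gets $G\cong E\times\mathbb{Z}_p^{(s)}$ with $E$ extraspecial of exponent $p$—and then invoke the known Schur multipliers of extraspecial groups together with the direct-product formula $\mathcal{M}(A\times B)\cong\mathcal{M}(A)\oplus\mathcal{M}(B)\oplus(A^{ab}\otimes B^{ab})$ to compute $|\mathcal{M}(G)|$ and verify it never exceeds $p^{\f12(n-1)(n-2)+1}$. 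This structural–computational step, resting on the bilinear-form analysis, is where the real work lies.

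Finally, the second inequality is immediate because the exponent $\f12(n+k-2)(n-k-1)+1=\f12(n^2-3n-k^2+k+4)$ is strictly decreasing in $k$ for $k\ge1$, so over non-abelian $G$ (where $k\ge1$) it is maximal at $k=1$. Hence any $G$ attaining the last bound has $|G'|=p$, and within the base-case analysis equality forces $|\mathcal{M}(G^{ab})|$ to be maximal—so $G^{ab}$ is elementary abelian of rank $n-1$—and forces the alternating form to have rank exactly $2$ with $G$ of exponent $p$; these constraints pin down $G\cong E_1\times Z$ with $Z$ elementary abelian. The converse is the direct computation via the direct-product formula, using $\mathcal{M}(E_1)\cong\mathbb{Z}_p\oplus\mathbb{Z}_p$, which indeed returns $p^{\f12(n-1)(n-2)+1}$.
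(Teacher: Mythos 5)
A preliminary remark: the paper does not prove Theorem \ref{2} at all --- it is quoted from \cite[Main Theorem]{ni} and used as a black box --- so the only comparison available is with the proof in that cited source. Your overall architecture does mirror it: the Ganea/Jones-type recursion $|\mathcal{M}(G)|\le p^{\,d(G)-1}|\mathcal{M}(G/K)|$ for a central $K\le G'$ of order $p$, induction on $k$ for $k\ge 2$, reduction of the second bound to $k=1$ by monotonicity of the exponent in $k$, and a separate structural treatment of the case $|G'|=p$. The exact sequence is invoked correctly, and both the inductive step and the monotonicity computation check out.

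The genuine gap is in the base case $k=1$, which you rightly identify as the crux but then sketch with a false structural claim that would sink both the inequality there and the equality characterization. If $|G'|=p$ and $G/G'$ is elementary abelian, $G$ need \emph{not} be $E\times{\mathbb{Z}}^{(s)}_p$ with $E$ extraspecial of exponent $p$: the groups $E_2\times\mathbb{Z}_p$, $Q_8\times\mathbb{Z}_2$ and the central product $E_4=\mathbb{Z}_{p^2}*E_1$ --- all of which occur in this very paper's classification --- satisfy your hypotheses and are not of that form. What the symplectic-form analysis actually yields is a central product $E\cdot Z(G)$ in which the extraspecial factor may have exponent $p^2$ and $Z(G)$ may have exponent $p^2$ with $Z(G)^p\le G'$. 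The entire content of the equality statement is that among all these central products only $E_1\times(\text{elementary abelian})$ reaches $p^{\f{1}{2}(n-1)(n-2)+1}$ (for instance $|\mathcal{M}(E_2)|=1$ against $|\mathcal{M}(E_1)|=p^2$, extraspecial factors of rank $2m\ge 4$ fall short by $p^2$, and proper central amalgamations lose further factors of $p$); these are precisely the cases that must be computed and excluded, not assumed away. You also leave untreated the subcase of $k=1$ in which $G/G'$ is not elementary abelian --- it is routine, since the drop in $|\mathcal{M}(G/G')|$ absorbs the factor $p^{\,d-1}$, but it has to be addressed for the first inequality to be established.
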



\begin{thm}$\mathrm{(See}$ \cite[Theorem 2.2.10]{kar}\label{3}$\mathrm{).}$ For every finite groups $H$ and $K$, we have
\[\mathcal{M}(H\times K)\cong\mathcal{M}(H)\times \mathcal{M}(K)\times \ds\frac{H}{H'}\otimes\ds\frac{K}{K'}.\]
\end{thm}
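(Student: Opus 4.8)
The plan is to identify the Schur multiplier with the second integral homology group, $\mathcal{M}(G)\cong H_2(G,\mathbb{Z})$, and then exploit the fact that the integral homology of a direct product is governed by the Künneth formula. Since a classifying space $K(H\times K,1)$ may be taken to be the product $K(H,1)\times K(K,1)$, the homology of $H\times K$ is the homology of this product. I would therefore invoke the Künneth short exact sequence
\[
0\lo \bigoplus_{i+j=n} H_i(H)\otimes H_j(K)\lo H_n(H\times K)\lo \bigoplus_{i+j=n-1}\mathrm{Tor}(H_i(H),H_j(K))\lo 0,
\]
which splits (non-naturally), and specialize it to $n=2$.

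For $n=2$ the tensor part contributes the three summands with $i+j=2$, while the Tor part contributes the summands with $i+j=1$. The key arithmetic input is the standard identification of low-degree homology: $H_0(G)\cong\mathbb{Z}$, $H_1(G)\cong G/G'$, and $H_2(G)\cong\mathcal{M}(G)$. With this, the tensor summands become $H_2(H)$, $H_1(H)\otimes H_1(K)=\frac{H}{H'}\otimes\frac{K}{K'}$, and $H_2(K)$. The two Tor summands are $\mathrm{Tor}(\mathbb{Z},K/K')$ and $\mathrm{Tor}(H/H',\mathbb{Z})$, both of which vanish because $\mathbb{Z}$ is torsion-free, hence flat, as a $\mathbb{Z}$-module. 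Combining these, the split exact sequence collapses to the desired isomorphism
\[
\mathcal{M}(H\times K)\cong \mathcal{M}(H)\times\mathcal{M}(K)\times \frac{H}{H'}\otimes\frac{K}{K'},
\]
where the direct sum of abelian groups is written multiplicatively as a direct product.

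The step I expect to require the most care is the Künneth decomposition itself: one must justify that $K(H,1)\times K(K,1)$ is a $K(H\times K,1)$ and that the algebraic Künneth theorem applies to the projective resolutions computing group homology, together with the splitting of the resulting short exact sequence. An alternative, purely group-theoretic route avoids topology entirely: starting from free presentations $1\to R\to F\to H\to 1$ and $1\to S\to E\to K\to 1$, one can build a presentation of $H\times K$ and apply the Hopf formula $\mathcal{M}(G)\cong (R\cap[F,F])/[F,R]$, extracting the cross term $\frac{H}{H'}\otimes\frac{K}{K'}$ through explicit commutator identities in $F\times E$. This is more elementary but computationally heavier, and identifying the tensor factor with the quotient of commutators is the delicate point there.
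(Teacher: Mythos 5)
Your argument is correct, but note that the paper itself offers no proof of this statement: it is quoted verbatim from Karpilovsky's monograph (Theorem 2.2.10 there), so there is nothing internal to compare against. Your K\"unneth route is the standard modern proof and all the steps check out: $K(H,1)\times K(K,1)$ is aspherical with fundamental group $H\times K$, Hopf's theorem identifies $\mathcal{M}(G)$ with $H_2(G,\mathbb{Z})$, the degree-$2$ tensor terms give exactly $H_2(H)\oplus H_2(K)\oplus \bigl(H/H'\otimes K/K'\bigr)$, and the $\mathrm{Tor}$ terms in total degree $1$ vanish since each involves $H_0\cong\mathbb{Z}$, which is flat. The only points deserving explicit care are the ones you already flag: the (non-natural) splitting of the K\"unneth sequence, which is harmless here since only the isomorphism type is claimed, and the identification $H_1(G)\cong G/G'$. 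Karpilovsky's own treatment is closer to your suggested alternative, working with free presentations and Hopf's formula (equivalently, with cocycles), which keeps the argument self-contained at the cost of the commutator bookkeeping you describe; the topological version buys brevity at the cost of importing the K\"unneth and Hopf machinery. Either way the statement stands as you proved it.
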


\begin{thm}$\mathrm{(See}$ \cite[Theorem 3.3.6]{kar}\label{4}$\mathrm{).}$
Let G be an extra special $p$-group of order $p^{2m+1}$.
Then\begin{itemize}
\item[(i)]  If $m\geq 2$, then ${|\mathcal M}(G)|=p^{2m^2-m-1}$.
\item[(ii)] If $m=1$, then the order of Schur multiplier of $D_8, Q_8, E_1$ and $E_2$ are
equal to  $2,1,p^2$ and $1$, respectively.
\end{itemize}
\end{thm}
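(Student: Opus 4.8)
The plan is to realize $G$ as a central extension and feed it into the standard homological exact sequence for such extensions, reducing everything to the already-understood multiplier of an elementary abelian group. Write $Z=Z(G)=G'=\Phi(G)\cong\mathbb{Z}_p$ and $V=G/Z\cong{\mathbb{Z}}^{(2m)}_{p}$; since $G$ is extra special the commutator map descends to a nondegenerate alternating form $b\colon V\times V\to Z$. The extension $1\to Z\to G\to V\to 1$ is central, so I would invoke the Ganea exact sequence
\[
Z\otimes V \stackrel{\alpha}{\lo}\mathcal{M}(G)\stackrel{\beta}{\lo}\mathcal{M}(V)\stackrel{\gamma}{\lo}Z\lo 0,
\]
in which the tail $Z\to G^{ab}\to V^{ab}$ has been suppressed: because $G^{ab}\cong V\cong V^{ab}$ via the quotient map, the arrow $Z\to G^{ab}$ vanishes and hence $\gamma$ is onto.

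Next I would pin down the two outer terms. Iterating Theorem \ref{3} along $V\cong{\mathbb{Z}}^{(2m)}_{p}$ and using $\mathcal{M}(\mathbb{Z}_p)=1$ gives $\mathcal{M}(V)\cong\wedge^2 V$ of order $p^{\binom{2m}{2}}=p^{2m^2-m}$. Under this identification the connecting map $\gamma$ is exactly the form $b$, whose nondegeneracy makes it surjective, so $|\ker\gamma|=p^{2m^2-m-1}$. Exactness then yields
\[
|\mathcal{M}(G)|=|\mathrm{im}\,\alpha|\cdot|\ker\gamma|=|\mathrm{im}\,\alpha|\cdot p^{2m^2-m-1},
\]
so the whole theorem comes down to showing that the Ganea map $\alpha\colon Z\otimes V\to\mathcal{M}(G)$ is trivial when $m\ge 2$.

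This last point is where I expect the real work to lie, and it is also the step that must \emph{fail} in the exceptional cases, since for $E_1$ the map $\alpha$ has to account for an extra factor $p^2$. I would attack it by extending the sequence one term to $H_3(V)\stackrel{\delta}{\lo}Z\otimes V\stackrel{\alpha}{\lo}\mathcal{M}(G)$ and proving that $\delta$ is onto for $m\ge2$, i.e. that every element of $Z\otimes V\cong{\mathbb{Z}}^{(2m)}_{p}$ is a transgression coming from a degree-three class of $V$. Concretely this can be tested on a power-commutator presentation $G=F/R$: the image of $\alpha$ is generated by classes built from the relators expressing the $p$-th powers of the generators together with the commutator relations $[x_i,y_j]=1$ ($i\ne j$), and for $m\ge 2$ the presence of many commuting pairs supplies enough elements of $[F,R]$ to annihilate all of them, whereas for a single hyperbolic pair ($m=1$) some survive. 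The main obstacle is thus this explicit Ganea/transgression calculation; it is also precisely the mechanism separating exponent $p$ from exponent $p^2$ and $D_8$ from $Q_8$.

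Finally, for the base cases $m=1$ I would bypass the reduction and compute $\mathcal{M}(D_8)$, $\mathcal{M}(Q_8)$, $\mathcal{M}(E_1)$, $\mathcal{M}(E_2)$ directly from presentations via Hopf's formula $\mathcal{M}(G)\cong (R\cap F')/[F,R]$, recovering the values $2,1,p^2,1$. These four short computations also serve as the anchor confirming that $\mathrm{im}\,\alpha$ is genuinely nonzero exactly outside the range $m\ge 2$.
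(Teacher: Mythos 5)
The paper does not actually prove this statement: it is quoted verbatim from Karpilovsky \cite[Theorem 3.3.6]{kar}, so there is no internal argument to compare against. The result itself (essentially due to Beyl--Tappe) is correct, including the uniform value $p^{2m^2-m-1}$ for all extraspecial groups of order $p^{2m+1}$ with $m\geq 2$ and the four exceptional values at $m=1$, and your reduction is the standard route to it. The bookkeeping is right: for the central extension $1\to Z\to G\to V\to 1$ with $Z=G'=Z(G)\cong\mathbb{Z}_p$ and $V\cong{\mathbb{Z}}^{(2m)}_p$, the Ganea sequence together with $\mathcal{M}(V)\cong\wedge^2V$ of order $p^{2m^2-m}$ and the surjectivity of $\gamma$ (which does follow from $Z=G'\to G/G'$ being zero) yields $|\mathcal{M}(G)|=|\mathrm{im}\,\alpha|\cdot p^{2m^2-m-1}$.

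The genuine gap is that the theorem is then \emph{exactly} the assertion that $\mathrm{im}\,\alpha$ is trivial for $m\geq 2$ and has order $1,2,p^2,1$ for $Q_8, D_8, E_1, E_2$, and this is the part you do not prove. The image of $\alpha$ is generated by the exterior classes $z\wedge g$ with $z\in Z$, so its vanishing is equivalent to $Z(G)$ lying in the epicentre $Z^{*}(G)$, i.e.\ to the non-capability of every extraspecial group of order at least $p^5$; that is the entire content of part (i), and it is precisely what fails for the capable groups $E_1$ and $D_8$. The remark that ``the presence of many commuting pairs supplies enough elements of $[F,R]$ to annihilate all of them'' is a heuristic, not an argument --- it does not explain why the same mechanism does not already kill the image when $m=1$ and exponent is $p$, nor why it succeeds uniformly for both exponent types when $m\geq2$. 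Likewise the four Hopf-formula computations for $m=1$ are announced but not carried out. So the skeleton is correct and matches how the result is proved in the literature, but as written the proposal is an outline whose decisive step is missing.
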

\section{Main Result}
The aim of this section is to classify the structure of all $p$-groups when $t(G)=4$.
Since abelian groups with the property $t(G)=4$ are determined in \cite[Main Theorem (a)]{sa},
 we concentrate on non-abelian $p$-groups.
\begin{thm}\label{m2}Let $G$ be a non-abelian $p$-group of order $p^n$ and $n\geq 6$, then there is exactly
one group with the property $t(G)=4$ which is isomorphic to
$E_1\times {\mathbb{Z}}^{(3)}_p$.
\end{thm}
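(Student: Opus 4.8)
The plan is to combine the two upper bounds from Theorem~\ref{2} with the product formula from Theorem~\ref{3} to squeeze the possible structure of $G$. Write $|G'| = p^k$ with $k \geq 1$ since $G$ is non-abelian. The defining property $t(G)=4$ means $|\mathcal{M}(G)| = p^{\frac{1}{2}n(n-1)-4}$, so I first confront this exact order with the general inequality $|\mathcal{M}(G)| \leq p^{\frac{1}{2}(n+k-2)(n-k-1)+1}$. Equating the exponent lower bound $\frac{1}{2}n(n-1)-4$ against the bound's exponent $\frac{1}{2}(n+k-2)(n-k-1)+1$ yields, after expanding, a constraint forcing $k$ to be small: the quadratic in $k$ shows $\frac{1}{2}(n+k-2)(n-k-1) - \frac{1}{2}n(n-1) = -\frac{1}{2}(k-1)(2n+k-2) \cdot(\text{something})$, and requiring this to be at least $-5$ will pin $k=1$ once $n$ is large enough (here $n \geq 6$). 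So the first key step is the arithmetic argument that $t(G)=4$ with $n\geq 6$ forces $|G'|=p$.

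Once $k=1$, I would invoke the sharper ``in particular'' bound $|\mathcal{M}(G)| \leq p^{\frac{1}{2}(n-1)(n-2)+1}$, with equality exactly when $G = E_1 \times Z$ for $Z$ elementary abelian. The next step is to check whether the equality case is the only one compatible with $t(G)=4$. Comparing the two exponents, $\frac{1}{2}(n-1)(n-2)+1$ versus $\frac{1}{2}n(n-1)-4$, their difference is $\frac{1}{2}(n-1)(n-2) - \frac{1}{2}n(n-1) + 5 = -(n-1)+5 = 6-n$. For $n \geq 6$ this difference is $\leq 0$, meaning the target exponent $\frac{1}{2}n(n-1)-4$ for $t(G)=4$ already exceeds (or for $n=6$ equals) the maximum exponent $\frac{1}{2}(n-1)(n-2)+1$ allowed by Theorem~\ref{2}. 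Hence $t(G)=4$ is achievable for $n\geq 6$ only when $n=6$ and the maximum is attained, which forces equality in the bound and therefore $G = E_1 \times Z$ with $Z$ elementary abelian.

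The final step is to identify $Z$ precisely. Since $n = 6$ and $|E_1| = p^3$, we need $|Z| = p^3$, so $Z \cong {\mathbb{Z}}^{(3)}_p$, giving $G \cong E_1 \times {\mathbb{Z}}^{(3)}_p$. To confirm this genuinely has $t(G)=4$ rather than merely being the unique candidate, I would verify directly: by Theorem~\ref{3} together with Theorem~\ref{4}(ii) (which gives $|\mathcal{M}(E_1)| = p^2$), one computes $|\mathcal{M}(E_1 \times {\mathbb{Z}}^{(3)}_p)|$ using $\mathcal{M}(E_1) \times \mathcal{M}({\mathbb{Z}}^{(3)}_p) \times (E_1/E_1' \otimes {\mathbb{Z}}^{(3)}_p)$, and check the resulting exponent equals $\frac{1}{2}\cdot 6 \cdot 5 - 4 = 11$.

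I expect the main obstacle to lie in the first step: carefully controlling the quadratic inequality in $k$ to rule out $k \geq 2$ for all $n \geq 6$ simultaneously, since the slack of $5$ between the two exponents could in principle permit a larger $k$ at the smallest value $n=6$. The argument must show that even the most favorable case $k=2$ overshoots the permitted deficit, so that $k=1$ is genuinely forced before the equality analysis of the second step can be applied.
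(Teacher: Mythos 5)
Your proposal is correct and follows essentially the same route as the paper: both use the bounds of Theorem~\ref{2} to force $n=6$ with equality in $|\mathcal{M}(G)|\leq p^{\frac{1}{2}(n-1)(n-2)+1}$, hence $G=E_1\times Z$, and then identify $Z\cong{\mathbb{Z}}_p^{(3)}$ via Theorems~\ref{3} and~\ref{4}. Your intermediate factorization of the exponent difference in the first step is garbled (the correct gap is $-(n-1)-\frac{1}{2}k(k-1)$, which does rule out $k\geq 2$ for $n\geq 6$), but this does not matter since your second step, resting on the unconditional ``in particular'' bound and the computation $6-n\leq 0$, already carries the whole argument.
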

\begin{proof}First assume that $|G^{'}|=p$. By Theorem \ref{2}, if $G$ satisfies the condition of equality, then
$G\cong E_1\times Z$. One can check that by Theorems \ref{3} and
\ref{4}, $Z\cong {\mathbb{Z}}^{(3)}_p$. Otherwise,
$|\mathcal{M}(G)|=p^{\f{1}{2}n(n-1)-4}\leq p^{\f{1}{2}(n-1)(n-2)}$
so $n\leq 5$.

Now assume that $|G^{'}|=p^k (k\geq 2)$, Theorem \ref{2} implies
that
\[\f{1}{2}(n^2-n-8)\leq \frac{1}{2}(n+k-2)(n-k-1)+1\leq \f{1}{2}n(n-3)+1,\] and hence $n\leq
3$ unless $k=2$, in which case $n\leq 5$.
\end{proof}
The following theorem is a consequence of Theorems \ref{2} and
\cite[Main Theorem]{ni2}.

\begin{thm}\label{m3} Let $G$ be a non-abelian $p$-group  of order $p^5$ and $t(G)=4$. Then $G$ is isomorphic to the
\[{\mathbb{Z}}^{(4)}_p\rtimes_\theta \mathbb{Z}_p (p\neq2)~\text{or}~ D_8\times  {\mathbb{Z}}^{(2)}_p.\]
\end{thm}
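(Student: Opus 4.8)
The plan is to turn the hypothesis into a single numerical constraint and then let the inequality of Theorem~\ref{2} perform the bookkeeping. Since $n=5$, the condition $t(G)=4$ says precisely that $|\mathcal{M}(G)|=p^{\frac{1}{2}\cdot 5\cdot 4-4}=p^{6}$. Writing $|G'|=p^{k}$ with $k\geq 1$ (as $G$ is non-abelian), I would substitute $n=5$ into the general bound of Theorem~\ref{2} to get $|\mathcal{M}(G)|\leq p^{\frac{1}{2}(3+k)(4-k)+1}$, whose exponent equals $7,6,4,1$ for $k=1,2,3,4$. Because the left-hand side is fixed at $p^{6}$, the values $k=3$ and $k=4$ are impossible, so $k\in\{1,2\}$. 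The key structural observation is that for $n=5$ one has $\frac{1}{2}(n-1)(n-2)=6$, so $p^{6}$ sits exactly one power below the universal maximum $p^{\frac{1}{2}(n-1)(n-2)+1}$ of Theorem~\ref{2}; this places the problem squarely in the near-extremal range controlled by \cite[Main Theorem]{ni2}.

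With $k$ confined to $\{1,2\}$, I would dispatch each case using the classification of groups attaining the second-largest multiplier order from \cite[Main Theorem]{ni2}. For $k=2$ the bound of Theorem~\ref{2} is met with equality; since the excerpt's equality clause is stated only for the case $k=1$, the identification of the extremal group must come from \cite[Main Theorem]{ni2}, which yields $\mathbb{Z}_{p}^{(4)}\rtimes_{\theta}\mathbb{Z}_{p}$ and shows it exists only for $p\neq 2$. For $k=1$ the value $p^{6}$ lies one below the maximum $p^{7}$, the latter attained solely by $E_{1}\times\mathbb{Z}_{p}^{(2)}$ by the equality statement of Theorem~\ref{2}; feeding this near-extremal information into \cite[Main Theorem]{ni2} should single out $D_{8}\times\mathbb{Z}_{2}^{(2)}$, forcing $p=2$.

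To close the argument I would verify that both candidates genuinely realize $t(G)=4$, using Theorems~\ref{3} and~\ref{4}. For $D_{8}\times\mathbb{Z}_{2}^{(2)}$, the decomposition formula of Theorem~\ref{3}, together with $|\mathcal{M}(D_{8})|=2$ from Theorem~\ref{4}(ii), $|\mathcal{M}(\mathbb{Z}_{2}^{(2)})|=2$ (itself a consequence of Theorem~\ref{3}), and $\frac{D_{8}}{D_{8}'}\otimes\mathbb{Z}_{2}^{(2)}\cong\mathbb{Z}_{2}^{(4)}$, gives $|\mathcal{M}|=2\cdot 2\cdot 2^{4}=2^{6}$, as required; a parallel direct computation confirms the odd-$p$ semidirect product.

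The main obstacle, and the step I would least want to treat as routine, is the case $k=1$. Here the target value is one below the maximum, so Theorem~\ref{2} by itself does not isolate $G$; one must understand the precise gap between the top two multiplier orders among class-two groups with $G'\cong\mathbb{Z}_{p}$, and in particular explain the dichotomy whereby the odd-$p$ extremal group lands in $k=2$ while the only $k=1$ solution occurs at $p=2$. This phenomenon, characteristic of multiplier computations that distinguish $D_{8}$ from $Q_{8}$, is exactly what \cite[Main Theorem]{ni2} is invoked to control; absent it, the proof would require a hands-on examination of every group of order $p^{5}$ with derived subgroup of order $p$.
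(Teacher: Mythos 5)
Your proposal is correct and follows essentially the same route as the paper: the paper's entire justification is that the theorem is a consequence of Theorem~\ref{2} together with \cite[Main Theorem]{ni2}, and your argument is precisely an unpacking of that, observing that $t(G)=4$ with $n=5$ forces $|\mathcal{M}(G)|=p^{6}=p^{\frac{1}{2}(n-1)(n-2)}$, the near-extremal value classified in \cite{ni2}, with Theorem~\ref{2} eliminating $|G'|\geq p^{3}$. Your explicit verification that $D_{8}\times\mathbb{Z}_{2}^{(2)}$ attains $t(G)=4$ via Theorems~\ref{3} and~\ref{4} is a harmless (and welcome) addition that the paper omits.
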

Now we may assume that the order of all non-abelian groups with the
property $t(G)=4$ is exactly $p^4$, by using Theorems \ref{m2} and
\ref{m3}.

In the case $p=2$, the following lemma characterizes  all groups of
order $16$ with $t(G)=4$.
\begin{lem}\label{m4} Let $G$ be a $p$-group of order $16$ with $t(G)=4$, then $G$ is isomorphic
to one of the groups listed below
\begin{itemize}
\item[(i)]$Q_8\times\mathbb{Z}_2$,
\item[(ii)] $\langle a,b~|~a^4=1, b^4=1,[a,b,a]=[a,b,b]=1,[a,b]=a^2b^2\rangle$,
\item[(iii)]$\langle a,b,c~|~a^2=b^2=c^2=1, abc=bca=cab\rangle$.
\end{itemize}
\end{lem}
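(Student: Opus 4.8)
The plan is to enumerate the groups of order $16$ and compute $t(G)$ for each non-abelian one, picking out exactly those with $t(G)=4$, i.e.\ with $|\mathcal{M}(G)|=2^{\f{1}{2}\cdot 5\cdot 4-4}=2^6$. There are $14$ groups of order $16$ up to isomorphism, $5$ abelian and $9$ non-abelian; since the abelian case is already settled in \cite{sa}, I would first dispose of the non-abelian groups that arise as a direct product with a cyclic factor using Theorem \ref{3}, which reduces their multiplier to a computation on the smaller factor. For instance, for a group of the form $H\times\mathbb{Z}_2$ with $H$ of order $8$, we have $\mathcal{M}(H\times\mathbb{Z}_2)\cong\mathcal{M}(H)\times\mathbb{Z}_2\otimes(H/H')$, and the orders of $\mathcal{M}(D_8)$, $\mathcal{M}(Q_8)$, $\mathcal{M}(E_1)$, $\mathcal{M}(E_2)$ are recorded in Theorem \ref{4}(ii). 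This immediately identifies $Q_8\times\mathbb{Z}_2$ in case (i): one checks $|\mathcal{M}(Q_8\times\mathbb{Z}_2)|=|\mathcal{M}(Q_8)|\cdot|\mathbb{Z}_2\otimes(Q_8/Q_8')|=1\cdot 4=2^2$, and against the target bound this gives $t=6-2=\dots$, so I must be careful to recompute and retain only those landing at $t=4$.

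Concretely, I would organize the nine non-abelian groups into those that split as a direct product with $\mathbb{Z}_2$ (handled by Theorem \ref{3} as above) and those that do not. For the non-split groups — including the two presentations displayed in cases (ii) and (iii) — I would compute the Schur multiplier directly, either from the Hopf formula $\mathcal{M}(G)\cong (R\cap[F,F])/[F,R]$ using the given finite presentation, or by recognizing the group up to isomorphism and invoking known multiplier tables for groups of order $16$. The group in case (iii), presented as $\langle a,b,c\mid a^2=b^2=c^2=1,\ abc=bca=cab\rangle$, is a well-known group of order $16$ and its multiplier can be read off or computed from the commutator structure; the group in case (ii) has $G'=\langle a^2b^2\rangle$ of order $2$, so Theorem \ref{2} bounds its multiplier and one verifies the bound is met.

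The main obstacle I anticipate is the multiplier computation for the two exceptional groups (ii) and (iii) that do not decompose as direct products, since Theorems \ref{3} and \ref{4} do not apply and I must work directly from the presentations. For these I would set up the free presentation $1\to R\to F\to G\to 1$, identify a basis of $R\cap[F,F]$ modulo $[F,R]$, and count; the bookkeeping in the nilpotent commutator calculus is where errors most easily creep in. A cleaner route, which I would prefer, is to invoke the complete classification of $\mathcal{M}(G)$ for all $14$ groups of order $16$ (these are tabulated in the literature on Schur multipliers of small groups) and simply read off which satisfy $|\mathcal{M}(G)|=2^6$; this sidesteps the delicate Hopf-formula computation entirely and reduces the lemma to matching the three groups in the list against the known table. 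Either way, the final step is to confirm that precisely the three groups (i)--(iii) attain $t(G)=4$ and that every other non-abelian group of order $16$ gives $t(G)\neq 4$.
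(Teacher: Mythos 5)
Your preferred ``cleaner route'' is exactly what the paper does: its entire proof of this lemma is a citation to Table~I of Brown--Johnson--Robertson \cite{br} (with \cite{ni1} as a secondary reference), from which one reads off the multipliers of all fourteen groups of order $16$ and keeps those meeting the target. So in approach you and the paper agree, and your fallback via the Hopf formula for the two non-split groups is a reasonable (if laborious) substitute.

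There is, however, a concrete numerical error at the very start that would derail the selection if carried through: a group of order $16$ has $n=4$, so $t(G)=4$ means $|\mathcal{M}(G)|=2^{\frac{1}{2}\cdot 4\cdot 3-4}=2^{2}$, not $2^{\frac{1}{2}\cdot 5\cdot 4-4}=2^{6}$ as you wrote (that exponent belongs to order $32$). Screening the table for multiplier $2^{6}$ would return only the elementary abelian group ${\mathbb{Z}}^{(4)}_2$ (which has $t=0$) and none of the three groups in the statement. Your later check of $Q_8\times\mathbb{Z}_2$, where you find $|\mathcal{M}|=2^2$ and write $t=6-2$, silently uses the correct value $\frac{1}{2}\cdot 4\cdot 3=6$, so the slip is localized and self-correcting, but the stated selection criterion must be fixed to $|\mathcal{M}(G)|=2^2$ before the case-by-case verification means anything. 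With that correction, the rest of your outline (Theorem~\ref{3} plus Theorem~\ref{4}(ii) for the direct products, table lookup or Hopf's formula for the groups in (ii) and (iii)) goes through and matches the paper's conclusion.
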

\begin{proof} The Schur multiplier of all groups of order 16 is determined in Table $I$ of \cite{br}
 $($also see \cite{ni1}$)$.
\end{proof}
\begin{lem}\label{m5} Let $G$ be a group of order $p^4 (p\neq2)$ and $Z(G)$ be of exponent $p^2$ with
$t(G)=4$.
 Then $G\cong E_4$, where $E_4$ is the unique central product of a cyclic group of order $p^2$ and a
non-abelian group of order $p^3$.
\end{lem}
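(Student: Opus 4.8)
The plan is to first pin down the coarse structure of $G$, then classify $G$ up to two explicit candidates, and finally use the hypothesis $t(G)=4$ (equivalently $|\mathcal{M}(G)|=p^2$) to eliminate one of them. Since the abelian groups of order $p^4$ with $Z(G)$ of exponent $p^2$ are already treated in \cite{sa}, I take $G$ non-abelian. Because $Z(G)$ has exponent $p^2$ it contains an element of order $p^2$, so $|Z(G)|\geq p^2$; as $G$ is non-abelian of order $p^4$ we cannot have $|Z(G)|\geq p^3$ (else $G/Z(G)$ would be cyclic and $G$ abelian), whence $|Z(G)|=p^2$ and $Z(G)\cong\mathbb{Z}_{p^2}$. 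Consequently $G/Z(G)$ has order $p^2$ and is non-cyclic, so $G/Z(G)\cong\mathbb{Z}_p\times\mathbb{Z}_p$; in particular $G'\leq Z(G)$ and $G$ has nilpotency class $2$.

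Next I would show $|G'|=p$. Writing $G=\langle x,y\rangle Z(G)$, the group $G'$ is cyclic, generated by $[x,y]$. Since $G/Z(G)$ has exponent $p$, every $g^p$ lies in $Z(G)$, so $[x^p,y]=1$; on the other hand the class-$2$ identity gives $[x^p,y]=[x,y]^p$. Hence $[x,y]^p=1$ and $G'\cong\mathbb{Z}_p$ is the unique subgroup of order $p$ of $Z(G)\cong\mathbb{Z}_{p^2}$. Fix a generator $z$ of $Z(G)$ and normalize so that $[x,y]=z^p$.

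To classify $G$ I would use the $p$-power map. For odd $p$ the class-$2$ collection formula $(gh)^p=g^ph^p[h,g]^{\binom{p}{2}}$, together with $p\mid\binom{p}{2}$ and the fact that $G'$ has exponent $p$, shows that $g\mapsto g^p$ induces a well-defined \emph{linear} map $\phi\colon G/Z(G)\to Z(G)/G'\cong\mathbb{Z}_p$. The commutator pairing on $G/Z(G)\cong\mathbb{Z}_p^2$ is a non-degenerate alternating form, unique up to equivalence, and $\mathrm{SL}(2,p)$ acts transitively on the nonzero covectors, so $G$ is determined by whether $\phi=0$ or $\phi\neq0$. When $\phi=0$ one may choose $x,y$ with $x^p=y^p=1$, giving $G\cong E_1\circ\mathbb{Z}_{p^2}=E_4$, of exponent $p^2$; when $\phi\neq0$ one obtains the modular group $M=\langle x,y\mid x^{p^3}=y^p=1,\ x^y=x^{1+p^2}\rangle$, of exponent $p^3$, which also satisfies $Z(M)=\langle x^p\rangle\cong\mathbb{Z}_{p^2}$ and $|M'|=p$.

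It remains to separate these two candidates by their Schur multipliers, and this is where the real work lies. The group $M$ possesses a cyclic maximal subgroup $\langle x\rangle\cong\mathbb{Z}_{p^3}$; a $p$-group with a cyclic subgroup of index $p$ has Schur multiplier of order at most $p$, so $t(M)\geq 5$ and $M$ is excluded by the hypothesis $t(G)=4$. For $E_4$ one must verify $|\mathcal{M}(E_4)|=p^2$, that is $t(E_4)=4$. Since $E_4$ is a central product rather than a direct product, Theorem \ref{3} does not apply directly; I would instead compute $\mathcal{M}(E_4)$ from its presentation via Hopf's formula, or invoke the author's multiplier formulas in \cite{ni2}. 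This multiplier computation for the central product is the main obstacle, and once it yields $p^2$ the condition $t(G)=4$ forces $G\cong E_4$.
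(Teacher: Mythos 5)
Your argument is correct for the implication the lemma actually asserts, but it reaches it by a more explicit route than the paper. You classify by hand: after pinning down $Z(G)\cong\mathbb{Z}_{p^2}$, $G/Z(G)\cong\mathbb{Z}_p^{(2)}$ and $|G'|=p$, you use the linearity of the $p$-power map (valid for odd $p$ in class $2$ with $G'$ of exponent $p$) to reduce to exactly two candidates, $E_4$ and the modular group $M=\langle x,y\mid x^{p^3}=y^p=1,\ x^y=x^{1+p^2}\rangle$, and you kill $M$ via the classical fact that a group with a cyclic subgroup of index $p$ has Schur multiplier of order dividing $p$ (cite this; it is in Karpilovsky). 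The paper instead splits on whether $G/G'$ is elementary abelian: if not, $G$ has exponent $p^3$ (this is your group $M$) and $|\mathcal{M}(G)|=1$; if so, $G'=\Phi(G)$ and the structure lemma \cite[Lemma 2.1]{ni} on $p$-groups with $|G'|=p$ identifies $G$ as the central product $E_4$ without any generator-and-relation work. The two eliminations of $M$ are equivalent in effect; your identification of the $\phi=0$ case is a self-contained substitute for the citation of \cite[Lemma 2.1]{ni}. One remark on your final paragraph: the verification that $|\mathcal{M}(E_4)|=p^2$, which you flag as ``the main obstacle'' and leave open, is not actually needed for the stated implication --- once $M$ is excluded, the hypothesis $t(G)=4$ already forces $G\cong E_4$. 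It is needed only to confirm that $E_4$ really occurs in the final classification, and the paper disposes of it by sandwiching: $\mathcal{M}(G/G')\cong\mathbb{Z}_p^{(3)}\wedge\mathbb{Z}_p^{(3)}$ has order $p^3$ and \cite[Corollary 2.5.3(i)]{kar} gives $|\mathcal{M}(G)|\geq p^{3}/|G'|=p^2$, while an upper bound of $p^2$ comes from the author's estimates; so no Hopf-formula computation from a presentation is required.
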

\begin{proof}If  $G/G^{'}$ is not elementary abelian, then one can check that
$G$ is of exponent $p^3$, and so $|\mathcal{M}(G)|=1$. Thus
$G/G^{'}$ is elementary abelian, and hence that $G^{'}$ and Frattini
subgroup coincide. Using \cite[Corollary 2.5.3(i)]{kar}, we have
$|\mathcal{M}(G)|\geq p^2$. On the other hand, one can see that
$|\mathcal{M}(G)|\leq p^2$. The rest of proof is obtained directly
by using \cite[Lemma 2.1]{ni}.
 \end{proof}

\begin{lem}Let $G$ be a group of order $p^4 (p\neq2)$, $|G^{'}|=p$, $Z(G)$ of exponent $p$ and $t(G)=4$,
then $G$ is isomorphic to
\[E_2\times \mathbb{Z}_p~\text{or}~ \langle a,b~|~a^{p^2}=1, b^p=1,[a,b,a]=[a,b,b]=1\rangle.\]
\end{lem}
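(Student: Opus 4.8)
The plan is to proceed by structural analysis of $G$ under the three standing hypotheses: $|G| = p^4$, $|G'| = p$, and $Z(G)$ of exponent $p$. First I would exploit the fact that $|G'| = p$ to pin down the isomorphism type of $G/Z(G)$. Since $G'$ has order $p$, the commutator map induces a nondegenerate alternating form on $G/Z(G)$, forcing $G/Z(G)$ to be elementary abelian of even rank; combined with $|G| = p^4$ this typically leaves $|Z(G)| = p^2$ and $G/Z(G) \cong \mathbb{Z}_p^{(2)}$. Because $Z(G)$ has exponent $p$, we then know $Z(G) \cong \mathbb{Z}_p^{(2)}$, and $G$ is a group of maximal class or close to it with a controlled center. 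I would enumerate the possibilities for $G$ as a central extension, recording the exponent of $G$ and whether $G/G'$ is elementary abelian, since these invariants distinguish the two target groups.

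Next I would compute $|\mathcal{M}(G)|$ for each candidate and isolate exactly those with $t(G) = 4$, i.e.\ $|\mathcal{M}(G)| = p^{\frac{1}{2}\cdot 4\cdot 3 - 4} = p^2$. The two named groups are $E_2 \times \mathbb{Z}_p$ and $H := \langle a,b \mid a^{p^2}=1,\, b^p=1,\, [a,b,a]=[a,b,b]=1\rangle$. For the first, I would apply Theorem \ref{3} with $H = E_2$ and $K = \mathbb{Z}_p$: Theorem \ref{4}(ii) gives $\mathcal{M}(E_2) = 1$, $\mathcal{M}(\mathbb{Z}_p) = 1$, and $(E_2/E_2') \otimes (\mathbb{Z}_p/\mathbb{Z}_p') \cong \mathbb{Z}_p^{(2)} \otimes \mathbb{Z}_p \cong \mathbb{Z}_p^{(2)}$, so $|\mathcal{M}(E_2 \times \mathbb{Z}_p)| = p^2$ as required. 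For the group $H$, which is the other nonabelian group of order $p^4$ with $|H'| = p$ and central quotient of the correct shape, a direct computation (or an appeal to known classifications such as the reference \cite{ni} already invoked in Lemma \ref{m5}) gives $|\mathcal{M}(H)| = p^2$ as well.

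The main obstacle will be showing that these two groups are the \emph{only} ones satisfying all the hypotheses, rather than merely verifying that they qualify. The subtlety is that $|G'| = p$ with $|G| = p^4$ admits several isomorphism types, and I must rule out every group whose multiplier has order different from $p^2$, being careful that the hypothesis $Z(G)$ of exponent $p$ genuinely excludes the group $E_4$ handled in Lemma \ref{m5} (whose center has exponent $p^2$) while admitting $E_2 \times \mathbb{Z}_p$ and $H$. I would therefore cross-check the exponent of $Z(G)$ for each candidate: groups in which some generator of order $p^2$ lies central must be discarded, and I expect this constraint, together with the requirement $|\mathcal{M}(G)| = p^2$, to collapse the list to exactly the two stated groups. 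The cleanest route is to combine the Schur multiplier bounds of Theorem \ref{2} with the explicit multiplier computations above, so that any remaining candidate is forced to have $t(G) \neq 4$.
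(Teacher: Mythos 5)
Your structural reduction is sound: since $|G'|=p$ forces $G'\le Z(G)$, the induced alternating form on $G/Z(G)$ does give $G/Z(G)\cong{\mathbb{Z}}^{(2)}_p$ and hence $Z(G)\cong{\mathbb{Z}}^{(2)}_p$, and your verification that $|\mathcal{M}(E_2\times\mathbb{Z}_p)|=p^2$ via Theorems \ref{3} and \ref{4} is correct. But the heart of the lemma is the completeness claim, and in your write-up that step is only announced (``I would enumerate\dots'', ``I expect this constraint\dots to collapse the list''), never carried out. For odd $p$ there are exactly four groups of order $p^4$ with $|G'|=p$ and $Z(G)$ elementary abelian of order $p^2$: the two target groups together with $E_1\times\mathbb{Z}_p$ and $\langle a,b\mid a^{p^2}=b^{p^2}=1,[a,b,a]=[a,b,b]=1,[a,b]=a^p\rangle$. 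Your proposal never names these two extra candidates, and the tool you propose for exclusion --- the bounds of Theorem \ref{2} --- cannot do the job: for $n=4$, $k=1$ that theorem only yields $|\mathcal{M}(G)|\le p^{4}$, i.e.\ $t(G)\ge 2$, which does not separate $t(G)=4$ from $t(G)=2$ or $t(G)=5$. What is actually needed is an exact computation for each candidate: Theorems \ref{3} and \ref{4} give $|\mathcal{M}(E_1\times\mathbb{Z}_p)|=p^2\cdot p^2=p^4$, so $t=2$ and this group is excluded, while the metacyclic group above has multiplier of order $p$ (the paper cites \cite[Theorem 2.2.5]{kar}), so $t=5$ and it too is excluded. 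These computations are the substance of the proof and are missing.

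For comparison, the paper organizes the case analysis by whether $G/G'$ is elementary abelian: if it is, \cite[Lemma 2.1]{ni} together with Theorem \ref{4} leaves only $E_2\times\mathbb{Z}_p$; if it is not, Burnside's list \cite{bu} gives precisely the two remaining groups, whose multipliers are $p^2$ and $p$ respectively, so only the first survives. Note also that you assert $|\mathcal{M}(H)|=p^2$ for $H=\langle a,b\mid a^{p^2}=b^p=1,[a,b,a]=[a,b,b]=1\rangle$ with no argument beyond ``a direct computation''; the paper's justification is that $H$ has a central subgroup $N$ with $N\cap H'=1$ and $H/N\cong E_1$, so the five-term exact sequence makes $\mathcal{M}(H)$ surject onto $\mathcal{M}(E_1)\cong{\mathbb{Z}}^{(2)}_p$, which with a matching upper bound gives $p^2$ exactly. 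Some such argument must be supplied before your proposal constitutes a proof.
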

\begin{proof}
First suppose that $G/G^{'}$ is elementary abelian. Then \cite[Lemma
2.1]{ni} and Theorem \ref{4} follow that $G\cong
E_2\times\mathbb{Z}_p.$ Otherwise by  \cite[pp. 87-88]{bu}, there
are two groups
\[\langle a,b~|~a^{p^2}=1,b^p=1,[a,b,a]=[a,b,b]=1\rangle~\text{and}\]
\[ \langle a,b~|~a^{p^2}=b^{p^2}=1,[a,b,a]=[a,b,b]=1,[a,b]=a^p
\rangle\] such that $Z(G)\cong \mathbb{Z}_p\otimes \mathbb{Z}_p$,
$G/G^{'}\cong \mathbb{Z}_p\otimes \mathbb{Z}_{p^2}$ and  $G^{'}\cong
\mathbb{Z}_p$.

 Since the first has a central subgroup $H$ such that $G/H\cong E_1$, one can see
 that the order of its Schur multiplier is exactly $p^2$. On the other hand, \cite[Theorem 2.2.5]{kar} shows that
  the second group has
 $|\mathcal{M}(G)|=p$, which follows the result.
\end{proof}
\begin{lem}Let $G$ be a group of order $p^4 (p\neq2)$, $|G^{'}|=p^2$ and $t(G)=4$, then
$G$ is isomorphic to one of the following groups.
\begin{itemize}
\item[(i)]$\langle a,b|~a^9=b^3=1,[a,b,a]=1,[a,b,b]=a^6,[a,b,b,b]=1,\rangle$
\item[(ii)] $\langle a,b~|~a^p=1, b^p=1,[a,b,a]=[a,b,b,a]=[a,b,b,b]=1\rangle (p\neq3).$
\end{itemize}
\end{lem}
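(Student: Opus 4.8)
Since $|G/G'|=p^2$, the group is $2$-generated, say $G=\langle a,b\rangle$. The first step is to show that $G$ has nilpotency class exactly $3$. If $G$ had class $2$ then $G'=\langle[a,b]\rangle$ is cyclic, and from $|G/G'|=p^2$ one gets $a^p,b^p\in G'\leq Z(G)$; the class-$2$ identity $[a,b]^p=[a^p,b]=1$ then forces $|G'|=p$, contradicting $|G'|=p^2$. Hence $G$ is of maximal class, so that $\gamma_3(G)=Z(G)$ has order $p$, $\gamma_4(G)=1$, and in particular $Z(G)$ is automatically of exponent $p$ (there is no subcase on $Z(G)$ left to treat).

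\textbf{Listing the candidates.} The groups of maximal class of order $p^4$ are known explicitly (Burnside, \cite{bu}); they form a short list which I would reproduce, writing $c=[a,b]$ and $d=[c,b]=[a,b,b]\in\gamma_3$ and normalizing $[a,b,a]=1$ so that the members differ only in their power maps. Here the regular prime case $p\geq 5$ and the exceptional prime $p=3$ must be separated: for $p=3$ the natural exponent-$p$ candidate collapses, since the free $2$-generated exponent-$3$ group is $E_1$ of order $27$ and there is \emph{no} $2$-generated exponent-$3$ group of order $81$. Thus group (ii) is the exponent-$p$ maximal-class group, existing only for $p\neq 3$, while the $p=3$ analysis forces exponent $9$ and produces group (i), where the irregularity surfaces as $[a,b,b]=a^6$.

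\textbf{Computing the multipliers.} Theorem \ref{2} with $n=4$, $k=2$ already gives $|\mathcal{M}(G)|\leq p^3$. For the exponent-$p$ group (ii) I would exploit that $G/\gamma_3\cong E_1$: taking $H=\gamma_3=Z(G)$ of order $p$ (so $H\leq G'$), the Ganea sequence for the central extension $1\to H\to G\to E_1\to 1$,
\[ H\otimes (E_1/E_1') \lo \mathcal{M}(G)\lo \mathcal{M}(E_1)\lo H\cap G'\lo 0, \]
together with $|\mathcal{M}(E_1)|=p^2$ (Theorem \ref{4}), $E_1/E_1'\cong{\mathbb{Z}}^{(2)}_{p}$ and $H\cap G'=H$, shows that $\mathcal{M}(G)\to\mathcal{M}(E_1)$ has image of order $p$, so $|\mathcal{M}(G)|$ equals $p$ times the image of $H\otimes(E_1/E_1')$; a direct analysis of that image (using that $G/G'$ is elementary abelian, via \cite[Lemma 2.1]{ni}) pins it to $p^2$. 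For the remaining maximal-class groups in the list I would run the same central-quotient argument, or cite the already-computed value, to conclude $|\mathcal{M}(G)|\leq p$, hence $t(G)>4$. For $p=3$ I would appeal to the classification and the known Schur multipliers of the groups of order $81$, isolating group (i) as the unique maximal-class group of order $81$ with $|\mathcal{M}(G)|=9$.

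\textbf{Main obstacle.} The delicate point is the exact evaluation of $|\mathcal{M}(G)|$: both Theorem \ref{2} and the central exact sequence only bracket it between $p$ and $p^3$, so the crux is computing the image of $H\otimes(E_1/E_1')$ inside $\mathcal{M}(G)$ precisely, and handling the irregular prime $p=3$ where the power structure and the lower central series intertwine (as in the relation $[a,b,b]=a^6$ of group (i)).
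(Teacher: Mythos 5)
Your reduction to maximal class is correct and is a nice structural observation that the paper does not make: a $2$-generated group of class $2$ has cyclic derived subgroup, and the identity $[a,b]^p=[a^p,b]=1$ (valid since $a^p\in G'\leq Z(G)$) does force $|G'|=p$, so class $3$ is unavoidable. Your remark that the exponent-$p$ candidate cannot exist for $p=3$ (because the free $2$-generator group of exponent $3$ already has order $27$) is also sound. For comparison, the paper's own proof is essentially a citation: it lists the fifteen groups of odd order $p^4$ from Burnside/Schenkman and refers to the last page of Ellis's paper, where the Schur multipliers of all of them are tabulated; the selection of (i) and (ii) is then read off.

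The genuine gap is exactly where you place it, and it is not closed. The whole content of the lemma is the determination of \emph{which} maximal-class groups of order $p^4$ have $|\mathcal{M}(G)|=p^2$, and your argument never establishes this. The Ganea five-term sequence for $1\to\gamma_3(G)\to G\to E_1\to 1$ only yields $p\leq|\mathcal{M}(G)|\leq p^3$ (image of $\mathcal{M}(G)\to\mathcal{M}(E_1)$ of order $p$, times the image of $\gamma_3(G)\otimes G/G'$, which a priori has order $1$, $p$ or $p^2$), and the step "a direct analysis of that image pins it to $p^2$" is precisely the computation that has to be done; deferring it to \cite[Lemma 2.1]{ni} does not work, since that lemma concerns groups with derived subgroup of order $p$ (it is the tool the paper uses in the $|G'|=p$ lemmas), whereas here $|G'|=p^2$ and $G$ has class $3$. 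The same unproved assertion is needed to rule out the other maximal-class groups ("$|\mathcal{M}(G)|\leq p$, hence $t(G)>4$" is stated, not derived), and the $p=3$ case is resolved only by appealing to "known Schur multipliers of the groups of order $81$", i.e.\ by the same external citation the paper itself uses. So the proposal supplies a correct skeleton but omits the decisive multiplier computations; to complete it you would either have to carry out the exterior-square/nonabelian tensor computation for each maximal-class group explicitly, or fall back on the tabulated values in Ellis or Burnside-Johnson-Robertson as the paper does.
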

\begin{proof} The fifteen groups of odd order $p^4$ are listed in \cite{bu} or \cite{sch}. Our conditions reduce these groups
to the unique group (see also \cite[pp. 4177]{el} for more details).
\end{proof}
In the following Theorem we summarize the results.
\begin{thm}Let $G$ be a non-abelian group of order $p^n$ with $t(G)=4$,
then $G$ is isomorphic to one of the following groups.

For $p=2$,
\begin{itemize}
 \item[(1)]$D_8\times  {\mathbb{Z}}^{(2)}_p$,
 \item[(2)] $Q_8\times\mathbb{Z}_2$,
 \item[(3)] $\langle a,b~|~a^4=1, b^4=1,[a,b,a]=[a,b,b]=1,[a,b]=a^2b^2\rangle$
\item[(4)] $\langle a,b,c~|~a^2=b^2=c^2=1, abc=bca=cab\rangle$.
\end{itemize}
For $p\neq 2$,
\begin{itemize}
\item[(5)]$E_4$,
\item[(6)]$E_1\times {\mathbb{Z}}^{(3)}_p$,
\item[(7)]${\mathbb{Z}}^{(4)}_p\rtimes_\theta \mathbb{Z}_p,$
\item[(8)]$E_2\times {\mathbb{Z}}_p$,
\item[(9)]$\langle a,b~|~a^{p^2}=1, b^p=1,[a,b,a]=[a,b,b]=1\rangle$,
\item[(10)]$\langle
a,b~|~a^9=b^3=1,[a,b,a]=1,[a,b,b]=a^6,[a,b,b,b]=1\rangle$,
\item[(11)] $\langle a,b~|a^p=1, b^p=1,[a,b,a]=[a,b,b,a]=[a,b,b,b]=1\rangle (p\neq3)$.
\end{itemize}

\end{thm}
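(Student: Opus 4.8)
The plan is to obtain the statement purely as a synthesis of the results already established, organizing the argument first by the order $p^n$ and then, for the one remaining small order, by the standard invariants $|G'|$ and the exponent of $Z(G)$. First I would dispose of the large orders. Theorem \ref{m2} shows that for $n\geq 6$ the only group is $E_1\times{\mathbb{Z}}^{(3)}_p$, which is item (6) (and is vacuous for $p=2$, since $E_1$ is an odd-order object), while Theorem \ref{m3} shows that for $n=5$ the only possibilities are ${\mathbb{Z}}^{(4)}_p\rtimes_\theta\mathbb{Z}_p$ with $p\neq2$, item (7), and $D_8\times{\mathbb{Z}}^{(2)}_p$ with $p=2$, item (1). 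This reduces the problem to groups of order exactly $p^4$.

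For order $p^4$ I would split on the prime. When $p=2$ the group has order $16$, and Lemma \ref{m4} supplies directly the three groups listed in items (2), (3), (4). When $p\neq2$ I would run a case analysis on the structure of $G$. Since $G$ is non-abelian of order $p^4$ we have $G'\neq 1$ with $G/G'$ non-cyclic, forcing $|G'|\in\{p,p^2\}$, and likewise $|Z(G)|\in\{p,p^2\}$, so $Z(G)$ has exponent $p$ or $p^2$. The three lemmas preceding this theorem are arranged exactly along this trichotomy: Lemma \ref{m5} treats the case $\exp Z(G)=p^2$ and returns $E_4$, item (5); the lemma with $|G'|=p$ and $\exp Z(G)=p$ returns $E_2\times\mathbb{Z}_p$ and the group of item (9); and the lemma with $|G'|=p^2$ returns the two groups of items (10) and (11).

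The only point requiring a little care --- and the one I would single out as the crux of the synthesis --- is to confirm that this trichotomy genuinely exhausts the non-abelian groups of order $p^4$ with $t(G)=4$, with no gap and no inconsistent overlap. Concretely: if $|G'|=p^2$ the last lemma applies irrespective of the center; if $|G'|=p$ the two remaining cases are separated by whether $\exp Z(G)$ equals $p^2$ or $p$; and one checks that the group $E_4$ produced by Lemma \ref{m5} satisfies $|E_4'|=p$, so that no group is counted under two hypotheses. Collecting the outputs of all four orders and both parity cases then yields precisely the groups (1)--(11). I anticipate no substantive difficulty beyond this verification, since the real mathematical content lives in the individual lemmas and the present theorem is essentially a bookkeeping step.
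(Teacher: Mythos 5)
Your proposal is correct and matches the paper exactly: the paper offers no separate argument for this theorem, stating only that it ``summarizes the results,'' and your synthesis --- reduction by order via Theorems \ref{m2} and \ref{m3}, Lemma \ref{m4} for $p=2$, and the trichotomy on $|G'|$ and $\exp Z(G)$ for odd $p$ and order $p^4$ --- is precisely the intended bookkeeping. Your explicit check that the three odd-order-$p^4$ lemmas exhaust all cases is a worthwhile addition that the paper leaves implicit.
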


\end{document}